\documentclass[11pt]{article}

\usepackage{amsmath,amsthm}

\usepackage{amssymb,latexsym}

\usepackage{enumerate}

\topmargin -0.65cm \oddsidemargin 0.75cm \evensidemargin 1.25cm
\textwidth 14.50cm \textheight 23.0cm

\newcommand{\BB}{{\cal B}}

\newcommand{\EE}{{\cal E}}

\newcommand{\FF}{{\cal F}}

\newcommand{\MM}{{\cal M}}

\newcommand{\SSS}{{\cal S}}

\newcommand{\BD}{{\mathbb D}}
\newcommand{\BN}{{\mathbb{N}}}
\newcommand{\BR}{{\mathbb{R}}}
\newcommand{\BX}{{\mathbb{X}}}
\newcommand{\dyw}{\mbox{\rm div}}
\newcommand{\fch}{\mathbf{1}}

\newtheorem{theorem}{\bf Theorem}[section]
\newtheorem{proposition}[theorem]{\bf Proposition}
\newtheorem{lemma}[theorem]{\bf Lemma}
\newtheorem{corollary}[theorem]{\bf Corollary}

\theoremstyle{definition}

\newtheorem{remark}[theorem]{Remark}
\newtheorem{example}[theorem]{Example}

\numberwithin{equation}{section}

\begin{document}

\title{On the structure of bounded smooth measures
associated with a quasi-regular Dirichlet form}
\author{Tomasz  Klimsiak \ and Andrzej Rozkosz}
\date{}
\maketitle
\begin{abstract}
We consider a quasi-regular Dirichlet form. We show that a bounded
signed measure charges no set of zero capacity associated with the
form if and only if the measure decomposed into the sum of an
integrable function and a bounded linear functional on the domain
of the form. The decomposition allows one to describe explicitly
the set of bounded measures charging no sets of zero capacity for
interesting classes of Dirichlet forms. By way of illustration,
some examples are given.
\end{abstract}

\footnotetext{2010 {\em Mathematics Subject Classification}.
Primary: 31C25; Secondary: 46E99; 60J45.}

\footnotetext{{\em Key words and phrases}. Dirichlet form, smooth
measure.}

\section{Introduction}

Boccardo, Gallou\"et and Orsina \cite{BGO} have shown that if
$D\subset\BR^d$ is an open bounded set and $\mu$ is a bounded
(signed) Borel measure  on $D$ then  $\mu$ charges no set  of zero
Newtonian capacity if and only if $\mu$ can be decomposed into the
sum of an integrable function and an element of the dual space
$H^{-1}(D)$ of the Sobolev space $H^1_0(D)$. If we denote by
$\MM_b$ the space of all bounded Borel measures on $D$ with
bounded total variation, and by $\MM_{0,b}$ its subset consisting
of all measures charging no set of zero capacity, then the
decomposition of \cite{BGO} can be stated succinctly as
\begin{equation}
\label{eq1.1} \MM_{0,b}(D)=L^1(D;dx)+ H^{-1}(D)\cap \MM_{b}(D).
\end{equation}
The decomposition (\ref{eq1.1}) when combined with the analogue of the
Lebesgue decomposition theorem saying that each bounded Borel
measure on $D$ can be uniquely decomposed into the absolutely
continuous and the singular part with respect to the capacity (see
Fukushima, Sato and Taniguchi \cite{FST}) gives a complete
description of bounded Borel measures on $D$.

In the language of Dirichlet forms the decomposition (\ref{eq1.1})
says that, if $\mu\in\MM_b(D)$, then its variation $|\mu|$ is smooth
with respect to the capacity associated with the classical
Dirichlet form if and only if $\mu$ admits a decomposition into an
integrable function and an element of the dual space of the domain
of the form. The decomposition (\ref{eq1.1}) is interesting in its own
right and together with the decomposition of \cite{FST} proved to
be very useful in investigating elliptic equations involving local
operators and measure data (see, e.g., \cite{BGO,BMP,DMOP,MP}; see
also \cite{DPP} for a parabolic version of (\ref{eq1.1}) and its
applications to parabolic equations). This and the fact that the
decomposition in \cite{FST} is proved in the setting of general
Dirichlet forms motivated us to ask whether (\ref{eq1.1}) can  also 
be generalized to the case of bounded smooth measures with respect
to general Dirichlet form.

The answer to the question is ``yes". Let $E$ be a metrizable
Lusin space, $m$  a positive $\sigma$-finite measure with full
support  on the $\sigma$-field of Borel subsets of $E$, and 
$(\EE,D(\EE))$ be a quasi-regular Dirichlet form on $L^2(E;m)$.
Our main result says that
\begin{equation}
\label{eq1.2} \MM_{0,b}(E)=L^1(E;m)+D(\EE)^*\cap\MM_b(E),
\end{equation}
i.e., if $\mu$ is a bounded Borel measure on $E$ then $|\mu|$ is
smooth with respect to the capacity determined by $(\EE,D(\EE))$
if and only if $\mu$ admits a decomposition of the form
\begin{equation}
\label{eq1.3} \mu=f\cdot m+\nu,
\end{equation}
where $f\in L^1(E;m)$, $\nu\in D(\EE)^*\cap\MM_b(E)$ and
$D(\EE)^*$ is the dual space of $D(\EE)$ equipped with the inner
product $\tilde\EE_1$ (for notation see Section \ref{sec2}).
Moreover, if $(\EE,D(\EE))$ is transient, then (\ref{eq1.2}) holds
with $D(\EE)^*$ replaced by the dual $\FF^*_e$ of the extended
Dirichlet space $(\FF_e,\tilde\EE)$. We also provide a simple
example showing that in general in  (\ref{eq1.2}) one cannot
replace $D(\EE)^*\cap\MM_b(E)$ by $(S_0-S_0)\cap\MM_b(E)$, where
$S_0$ is the set of $\EE$-smooth measures on $E$ of finite energy
(see Section \ref{sec2}) or, equivalently the set of all positive
elements of $D(\EE)^*$.

For many interesting classes of forms, one can describe the
structure of the spaces $D(\EE)^*$, $\FF^*_e$. Consequently, for
such classes the decomposition (\ref{eq1.2}) gives explicit
description of the set of bounded smooth measures. In Section
\ref{sec4}, we provide some examples to illustrate how
(\ref{eq1.2}) works in practice.

\section{Preliminaries} \label{sec2}

Throughout, we assume that $(\EE,D(\EE))$ is a quasi-regular
Dirichlet form on $L^2(E;m)$ (see \cite{CF,MOR,MR} for the
definitions). For $\alpha\ge0$ we set
$\EE_{\alpha}(u,v)=\EE(u,v)+\alpha(u,v)$, $u,v\in D(\EE)$, where
$(\cdot,\cdot)$ stands for the usual inner product in $L^2(E;m)$.
By $(\tilde\EE,D(\EE))$ we  denote the  symmetric part of
$(\EE,D(\EE))$ defined as
$\tilde\EE(u,v)=\frac12(\EE(u,v)+\EE(v,u))$, $u,v\in D(\EE)$.

We denote by $\FF_e$  the extended Dirichlet space
associated with the symmetric Dirichlet form $(\tilde\EE,D(\EE))$.
For $u\in\FF_e$ we set
$\EE(u,u)=\lim_{n\rightarrow\infty}\EE(u_n,u_n)$, where $\{u_n\}$
is an approximating sequence for $u$ (see \cite[Theorem 1.5.2]{FOT}).

If $(\EE,D(\EE))$ is transient then by \cite[Lemma 1.5.5]{FOT},
$(\FF_e,\tilde\EE)$ is a Hilbert space. Note also that if
$(\EE,D(\EE))$ is a quasi-regular Dirichlet form, then by
\cite[Proposition IV.3.3]{MR} each element $u\in D(\EE)$ admits a
quasi-continuous $m$-version denoted by $\tilde u$, and that
$\tilde u$ is $\EE$-q.e. unique for every $u\in D(\EE)$. If
moreover $(\EE,D(\EE))$ is transient, then the last statement holds
true for $D(\EE)$ replaced by $\FF_e$ (see Remark \ref{rem2.2}).

Recall that a positive measure $\mu$ on $\BB(E)$ is said to be
$\EE$-smooth ($\mu\in S$ in notation) if $\mu(B)=0$ for all
$\EE$-exceptional sets $B\in\BB(E)$ and there exists an $\EE$-nest
$\{F_k\}_{k\in\BN}$ of compact sets such that $\mu(F_k)<\infty$
for $k\in\BN$.

A measure $\mu\in S$ is said to be of finite energy integral
($\mu\in S_0$ in notation) if there is $c>0$ such that
\begin{equation}
\label{eq2.11} \int_E|\tilde u(x)|\,\mu(dx)\le c\EE_1(u,u)^{1/2},
\quad u\in D(\EE).
\end{equation}

If additionally $(\EE,D(\EE))$ is transient, then $\mu\in S$ is
said to be of finite 0-order energy integral ($\mu\in S^{(0)}_0$
in notation) if there is $c>0$ such that
\[
\int_E|\tilde u(x)|\,\mu(dx)\le c\EE(u,u)^{1/2}, \quad u\in\FF_e.
\]

If $(\EE,D(\EE))$ is regular and $E$ is a locally compact
separable metric space, then the notion of smooth measures defined
above coincides with that in \cite{FOT}. 
Moreover, if $\mu$ is a positive Radon measure on $E$ such that
(\ref{eq2.11}) is satisfied for all $v\in C_0(E)\cap D(\EE)$, then
$\mu$ charges no $\EE$-exceptional set (see \cite[Remark
A.2]{MMS}) and hence $\mu\in S_0$.

In the next section in the proof of our main theorem we will need
the lemma given below. It follows from the corresponding result
for regular forms by the so-called transfer method (see
\cite{CF,MR}) and perhaps is known, but we could not find proper
reference.

\begin{lemma}
\label{lem2.2} Assume that $(\EE,D(\EE))$ is transient. If $\mu\in
S$, then there is a nest $\{F_n\}$ such that $\fch_{F_n}\cdot\mu\in
S^{(0)}_0$ for each $n\in\BN$.
\end{lemma}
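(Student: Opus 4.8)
The plan is to remove the quasi-regularity assumption by the transfer method: one passes to a quasi-homeomorphic regular Dirichlet form on a locally compact space, where the $0$-order version of the classical structure theorem for smooth measures is available (or can be proved by a standard potential-theoretic argument), and then transports the resulting nest back.

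\emph{Reduction to the regular case.} By the transfer method (see, e.g., \cite{MR}) there are a locally compact separable metric space $\hat E$, a positive $\sigma$-finite Borel measure $\hat m$ on $\hat E$, a regular Dirichlet form $(\hat\EE,D(\hat\EE))$ on $L^2(\hat E;\hat m)$ and a quasi-homeomorphism $j:E\to\hat E$ such that $u\mapsto u\circ j^{-1}$ is an isometry of $(D(\EE),\tilde\EE_1)$ onto $(D(\hat\EE),\tilde{\hat\EE}_1)$ which maps quasi-continuous functions to quasi-continuous functions, $\EE$-exceptional sets to $\hat\EE$-exceptional sets, and sets up a one-to-one correspondence between $\EE$-nests and $\hat\EE$-nests. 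Since transience is an intrinsic property of the form, $(\hat\EE,D(\hat\EE))$ is transient too, and $j$ extends to an isometry of $(\FF_e,\tilde\EE)$ onto $(\hat\FF_e,\tilde{\hat\EE})$. Hence the image measure $\hat\mu:=\mu\circ j^{-1}$ belongs to $\hat S$, and $\nu\mapsto\nu\circ j^{-1}$ maps $S^{(0)}_0$ onto $\hat S^{(0)}_0$, because the defining inequality $\int_E|\tilde u|\,d\nu\le c\,\EE(u,u)^{1/2}$, $u\in\FF_e$, is transported verbatim. It therefore suffices to produce an $\hat\EE$-nest $\{\hat F_n\}$ with $\fch_{\hat F_n}\cdot\hat\mu\in\hat S^{(0)}_0$; then $\{j^{-1}(\hat F_n)\}$ is the required $\EE$-nest.

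\emph{The regular transient case.} Here the statement is the $0$-order analogue of \cite[Theorem 2.2.4]{FOT}, and I would argue as follows. Using transience, choose $g\in L^1(\hat E;\hat m)$ with $g>0$ $\hat m$-a.e. so small that $h:=\hat R_0 g$ is bounded; then $h\in\hat\FF_e$ admits a bounded quasi-continuous version $\tilde h$ with $\tilde h>0$ q.e. and $\hat\EE(h,v)=(g,v)$ for all $v\in\hat\FF_e$. Pick an $\hat\EE$-nest of compact sets $\{K_l\}$ with $\hat\mu(K_l)<\infty$ (possible since $\hat\mu\in\hat S$) and set $\hat F_n:=\{x\in K_n:\tilde h(x)\ge 1/n\}$, after replacing $h$ by a finely continuous version so that these sets may be taken closed. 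Since $\tilde h>0$ q.e. and $\mathrm{Cap}(\{\tilde h<1/n\})\to 0$, $\{\hat F_n\}$ is an $\hat\EE$-nest. Finally, for $0\le u\in\hat\FF_e\cap L^\infty$ one has $\tilde u\,\fch_{\hat F_n}\le n\,\tilde u\,\tilde h$ pointwise, and, estimating $\int\tilde u\,\tilde h\,d\hat\mu$ by means of the potential property of $h$ and of the fact that $\hat\FF_e\cap L^\infty$ is an algebra, one gets $\int_{\hat E}\tilde u\,\fch_{\hat F_n}\,d\hat\mu\le c_n\,\hat\EE(u,u)^{1/2}$, so that $\fch_{\hat F_n}\cdot\hat\mu\in\hat S^{(0)}_0$. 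Alternatively, one may combine \cite[Theorem 2.2.4]{FOT} with a further restriction to the level sets of such an $h$.

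\emph{Main obstacle.} The step I would expect to cost the most care is the bookkeeping in the transfer: one has to verify that the quasi-homeomorphism transports the $0$-order data — the extended Dirichlet space $(\FF_e,\tilde\EE)$, the $0$-order capacity, and hence the class $S^{(0)}_0$ — isometrically and bijectively, which relies on the fact that $\FF_e$, $\tilde\EE$ and the $0$-order capacity depend only on the form and are compatible with restriction to the complement of an exceptional set. A secondary technical point, in the regular model, is the construction of the bounded $0$-order reference potential $h$ and the proof that its level sets form a nest; this is exactly where transience is used in an essential way, since for a non-transient form no such $0$-order potential exists.
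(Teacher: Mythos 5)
Your reduction step is the same as the paper's: the paper uses the regular extension $(\EE^{\#},D(\EE^{\#}))$ of \cite[Theorem VI.1.2]{MR} and then verifies by hand (zero extension of an approximating sequence, equality of energies, identification of quasi-continuous versions q.e.) exactly the point you flag as the ``main obstacle'', namely that the correspondence respects $(\FF_e,\tilde\EE)$ and hence $S^{(0)}_0$. Deferring that verification is acceptable in a sketch, since only one direction is actually needed (each $u\in\FF_e$ transfers with the same energy, so the finite-energy inequality pulls back), and also the smoothness of the image measure and the pull-back of nests are covered by \cite[Lemma IV.4.5, Corollary VI.1.4]{MR}.

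The genuine gap is in your treatment of the regular transient case. The construction $\hat F_n=\{x\in K_n:\tilde h(x)\ge 1/n\}$ with $h=\hat R_0g$ cannot give $\fch_{\hat F_n}\cdot\hat\mu\in S^{(0)}_0$: $h$ is the potential of $g\cdot\hat m$ and carries no information about $\hat\mu$, so the inequality $\int\tilde u\,\tilde h\,d\hat\mu\le c\,\hat\EE(u,u)^{1/2}$ has no source; the estimates available (via $\fch_{\hat F_n}\le n\tilde h$ and the algebra property of $\FF_e\cap L^\infty$, or via the $1$-order Theorem 2.2.4 applied to $uh$) all degrade to bounds involving $\|u\|_\infty$, which is not permitted in the definition of $S^{(0)}_0$. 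The paper's own Remark (i) gives a counterexample: for the classical form on $D=B(0,1)\subset\BR^7$, the measure $\mu=\sum_n\sigma_{a_n}$, $a_n=n^{-1/4}$, is bounded and smooth, yet for $u(x)=|x|^{-2}-1\in H^1_0(D)$ one has $\int u\,d(\fch_K\mu)=\infty$ for every compact $K\supset\bar B(0,1/2)$; choosing the exhausting compacts $K_n$ (any nest of compacts is admissible in your construction since $\mu$ is finite) and noting $h\ge c>0$ on $\bar B(0,1/2)$, your $\hat F_n$ contains $\bar B(0,1/2)$ for large $n$, so $\fch_{\hat F_n}\mu\notin S^{(0)}_0$. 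The nest must be adapted to $\mu$ itself (through boundedness of the potential of the restricted measure), which is precisely the content of the $0$-order version of \cite[Theorem 2.2.4]{FOT}; the paper simply invokes it (it is valid for transient regular forms by the remark following \cite[Corollary 2.2.2]{FOT}), and once you do that no further level-set restriction is needed --- your ``alternative'' of combining the $1$-order theorem with level sets of $h$ fails for the same reason. (Two minor points in the same paragraph: fine continuity does not make the level sets closed --- one should intersect with a nest on which $\tilde h$ is continuous --- and $\mathrm{Cap}(\{\tilde h<1/n\})\to0$ is not the right justification that $\{\hat F_n\}$ is a nest, though the nest property itself is standard since $\tilde h>0$ q.e.)
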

\begin{proof}
Let $(\EE^{\#},D(\EE^{\#}))$ denote the regular extension of
$(\tilde\EE,D(\EE))$ specified by \cite[Theorem VI.1.2]{MR} and
let $i:E\rightarrow E^{\#}$ denote the inclusion map. Then
$(\EE^{\#},D(\EE^{\#}))$ is transient, and by Lemma IV.4.5 and
Corollary VI.1.4 in \cite{MR}, $\mu^{\#}=\mu\circ i^{-1}$ is a
smooth measure on $\BB(E^{\#})$. Therefore, by the 0-order version
of \cite[Theorem 2.2.4]{FOT} (see remark following \cite[Corollary
2.2.2]{FOT}), there exists an $\EE^{\#}$-nest $\{F_{k}\}$ on
$E^{\#}$ such that
$\mu^{\#}_{k}\equiv\mathbf{1}_{F_{k}}\cdot\mu^{\#}\in
S^{(0)}_{0}(E^{\#})$, $k\ge 1$. Let $\{E_k\}$ be an $\EE$-nest of
\cite[Theorem VI.1.2]{MR} and let $F_{k}'=F_{k}\cap E_{k}$,
$k\in\BN$. By \cite[Corollary VI.1.4]{MR}, $\{F_{k}'\}$ is an
$\EE$-nest on $E$. Set $\mu_k=\mathbf{1}_{F_{k}'}\cdot\mu$. We are
going to show that $\mu_k\in S^{(0)}_{0}$, i.e for any nonnegative
$u\in\FF_e$,
\begin{equation}
\label{eq2.06} \langle\mu_k,\tilde u\rangle\le c\EE(u,u)^{1/2}
\end{equation}
for some $c>0$. To this end, let us consider an approximating
sequence $\{u_n\}$ for $u$ and extend $u,u_n$ to functions
$u^{\#},u_n^{\#}$ on $E^{\#}$ by putting $u^{\#}(x)=u^{\#}_n(x)=0$
for $x\in E^{\#}\setminus E$. Then
$\EE^{\#}(u^{\#}_n-u^{\#}_l,u^{\#}_n-u^{\#}_l)
=\EE(u_n-u_l,u_n-u_l)$ for $n,l\in\BN$, so $\{u^{\#}_n\}$ is
$\EE^{\#}$-Cauchy sequence. Moreover, as
$m^{\#}(E^{\#}\setminus E)=0$, $u^{\#}_n\rightarrow u^{\#}$
$m^{\#}$-a.e. Consequently, $\{u^{\#}_n\}$ is an
$\EE^{\#}$-approximating sequence for $u^{\#}$. It follows that
$u^{\#}$ belongs to the extended space $\FF^{\#}_e$ for $\EE^{\#}$
and $ \EE(u,u)=\EE^{\#}(u^{\#},u^{\#})$. Since
${\widetilde{u^{\#}}}_{|E}$ is an $m$-version of $u$ and by
\cite[Corollary VI.1.4]{MR} the function
${\widetilde{u^{\#}}}_{|E}$ is $\EE$-quasi-continuous, we have $\tilde
u={\widetilde{u^{\#}}}_{|E}$ $\EE$-q.e. From this and the fact
that $\mu_{k}^{\#}=\mu_{k}$ on $E$ it follows that
\[
\langle\mu_k,\tilde u\rangle
=\langle\mu_k,{\widetilde{u^{\#}}}_{|E}\rangle
=\langle\mu^{\#}_k,{\widetilde{u^{\#}}}\rangle \le
c\EE^{\#}(u^{\#},u^{\#})^{1/2},
\]
which gives (\ref{eq2.06}).
\end{proof}

\begin{remark}
\label{rem2.2} Note that the argument following (\ref{eq2.06})
shows that each $u\in\FF_e$ admits an $\EE$-quasi-continuous
modification.
\end{remark}

Recall that by  \cite[Theorem IV.3.5]{MR} there exists an
$m$-tight special standard Markov process $\BX=(X_t,P_x)$ properly
associated with $(\EE,D(\EE))$. By \cite[Proposition IV.2.8]{MR},
the last statement means that for all $\alpha>0$ and $f\in
L^2(E;m)$ the resolvent $(R_{\alpha})_{\alpha>0}$ of $\BX$ defined
as
\[
R_{\alpha}f(x)=E_x\int^{\infty}_0e^{-\alpha t}f(X_t)\,dt,\quad
x\in E,\,\alpha>0,\,f\in\BB^+(E)
\]
($E_x$ stands for the expectation with respect to $P_x$) is an
$\EE$-quasi-continuous version of $G_{\alpha}f$, where
$(G_{\alpha})_{\alpha>0}$ is the resolvent associated with
$(\EE,D(\EE))$.

\section{Decomposition of bounded smooth measures}

Let $\MM_{b}$ denote the set of all Borel measures $\mu$ on $E$
such that $|\mu|(E)<\infty$, where $|\mu|$ is the total variation
of $\mu$. We denote by $\MM_{0,b}$ the set of all measures
$\mu\in\MM_b$ such that $|\mu|\in S$, and by $\MM^+_{0,b}$ the
subset of $\MM_{0,b}$ consisting of all positive measures. Since
$|\mu|\in S$ if and only if $\mu$ can be expressed as
$\mu=\mu^+-\mu^-$ with $\mu^+,\mu^-\in S$, we have
$\MM_{0,b}=(S-S)\cap \MM_b$. To shorten notation, given a measure
$\mu$ on $E$ and a function $u:E\rightarrow\BR$, we write
\[
\langle\mu,u\rangle=\int_Eu(x)\,\mu(dx),
\]
whenever the integral is well defined.

In what follows we consider $D(\EE)$ (resp. $\FF_e$) to be
equipped with the scalar product $\tilde\EE_1(\cdot,\cdot)$ (resp.
$\tilde\EE(\cdot,\cdot)$). We denote by  $D(\EE)^*$ (resp. $\FF^*_e$) 
the dual space of  $D(\EE)$ (resp. $\FF^*_e$).

\begin{proposition}
\label{prop3.1} If $\mu\in D(\EE)^*\cap\MM_b$, then $|\mu|\in S$.
\end{proposition}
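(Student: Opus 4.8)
The plan is to show that $\mu \in D(\EE)^*\cap\MM_b$ forces $|\mu|$ to charge no $\EE$-exceptional set and to be finite on the sets of a suitable nest; together these give $|\mu|\in S$. First I would reduce to the positive case. Writing $\mu = \mu^+ - \mu^-$ with $\mu^+,\mu^-$ the (bounded) positive and negative parts, it suffices to prove $\mu^+,\mu^- \in S$. Since $\mu \in D(\EE)^*$, one can try to transfer this property to $\mu^+$ and $\mu^-$; the natural route is via the Markov process $\BX$ and the associated additive functionals, but at the level of the form it may be cleaner to argue directly that a bounded measure in $D(\EE)^*$ has positive and negative parts again in $D(\EE)^*$, using the lattice structure of $D(\EE)$ (the form is Dirichlet, so $D(\EE)$ is stable under normal contractions and in particular under $u\mapsto u^+$). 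Then it is enough to treat a positive $\mu\in D(\EE)^*\cap\MM_b$.

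For positive $\mu\in D(\EE)^*\cap\MM_b$, the key step is to verify that $\mu$ charges no $\EE$-exceptional set. The idea is: being a bounded positive functional on $D(\EE)$ equipped with $\tilde\EE_1$, $\mu$ lies in $S_0$ by definition (inequality (\ref{eq2.11}) is exactly the statement that $\mu$ is a bounded functional of the right form, once one knows $\langle\mu,\tilde u\rangle$ makes sense for $u\in D(\EE)$ and agrees with the pairing). Concretely: for $u\in D(\EE)$ one has $\tilde u$ defined $\EE$-q.e.\ (by \cite[Proposition IV.3.3]{MR}), and I would check that the functional $u\mapsto\langle\mu,\tilde u\rangle$ coincides with the action of $\mu\in D(\EE)^*$, at least for a dense class of $u$ (e.g.\ $u = R_\alpha f$ for bounded $f\ge 0$, using that $R_\alpha f$ is a quasi-continuous version of $G_\alpha f$ as recalled after Remark \ref{rem2.2}, together with the resolvent identity and boundedness of $\mu$). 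Once $|\langle\mu,\tilde u\rangle| \le \|\mu\|_{D(\EE)^*}\,\tilde\EE_1(u,u)^{1/2}$ holds for all $u\in D(\EE)$, we get $\mu\in S_0\subset S$, which is precisely the claim.

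The main obstacle I anticipate is the identification $\langle\mu,\tilde u\rangle = {}_{D(\EE)^*}\langle\mu,u\rangle_{D(\EE)}$ for all $u\in D(\EE)$, i.e.\ showing that the measure-theoretic pairing against the quasi-continuous representative really is the functional $\mu$ acts by. This is delicate because a priori $\mu\in D(\EE)^*$ is just an abstract functional, and one must exhibit it as integration against $\tilde u$; the standard device is to approximate $u$ in $\tilde\EE_1$-norm by $R_\alpha f_n$-type potentials whose quasi-continuous versions converge $\mu$-a.e.\ along a subsequence (using a nest on which the convergence is uniform), then pass to the limit in the bounded-measure integral — this requires the quasi-uniform convergence of $\tilde u_n\to\tilde u$ along a subsequence, which holds after the reduction to a nest. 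A secondary technical point is making sure the positive part $\mu^+$ of $\mu$ is again in $D(\EE)^*$; if the lattice argument is awkward for non-symmetric $\EE$, one can instead invoke that $\mu\in D(\EE)^*$ with $\mu$ a measure implies $\mu$ is associated to an additive functional of bounded variation and split at that level. I expect the write-up to lean on \cite{MR} and the 0-order smooth-measure theory exactly as Lemma \ref{lem2.2} does.
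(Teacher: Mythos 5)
Your reduction to the positive case is the fatal step. You propose to show that $\mu^+,\mu^-$ are again in $D(\EE)^*$ (via ``lattice structure'' of $D(\EE)$, or alternatively by splitting an associated additive functional), and then to conclude $\mu^\pm\in S_0\subset S$. But a bounded measure belonging to $D(\EE)^*$ need \emph{not} have its positive and negative parts in $D(\EE)^*$: the paper's Remark (iii) constructs, for the classical form on $D=B(0,1)\subset\BR^7$, a measure $\mu\in H^{-1}(D)\cap\MM_{0,b}$ with $|\mu|\notin H^{-1}(D)$ (pairing $|\mu|$ with $u(x)=|x|^{-2}-1$ diverges), so at least one of $\mu^+,\mu^-$ lies outside the dual; likewise Remark (i) shows the singular part cannot lie in $S_0-S_0$. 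The fact that $u\mapsto u^+$ preserves $D(\EE)$ does not transfer to the dual in the way you need, and the fallback via additive functionals is circular: associating a (bounded-variation) additive functional to $\mu$ already presupposes that $|\mu|$ is smooth, which is precisely the statement to be proved. Thus the genuinely hard content of the proposition is the signed case, and your outline leaves it untouched; the positive case alone is comparatively easy (for positive bounded measures the energy bound on a core plus the cited remark of \cite{MMS} essentially gives smoothness, modulo the pairing identification you correctly flag).

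For comparison, the paper never decomposes the functional. It first reduces to the symmetric case and, by the transfer method (regular extension $(\EE^{\#},D(\EE^{\#}))$ as in Lemma \ref{lem2.2}), to a regular form on a locally compact space. Then, given a Borel set $B$ with $\mathrm{Cap}(B)=0$ contained in the positive set of a Hahn decomposition, it uses inner/outer regularity of the bounded measure to pick $K\subset B\subset U$ with $|\mu|(U\setminus K)\le\varepsilon$, tests the \emph{signed} functional against functions $\bar\eta\in D(\EE_U)\cap C_0(U)$ with $\mathbf{1}_K\le\bar\eta\le1$, and obtains $\mu^+(K)\le\|\mu\|_{D(\EE)^*}\,\mathrm{Cap}_U(K)^{1/2}+\varepsilon$-type bounds via \cite[Lemma 2.2.7]{FOT}, the point being that the error from $U\setminus K$ is controlled by $\varepsilon$ while $\bar\eta\ge\mathbf{1}_K$ isolates $\mu^+(K)$. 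Since $\mathrm{Cap}(K)=0$ forces $\mathrm{Cap}_U(K)=0$, this yields $\mu^+(B)=0$, and symmetrically for $\mu^-$. Some device of this kind, exploiting local test functions concentrated near the null set rather than a global splitting of $\mu$, is what your proposal is missing.
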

\begin{proof}
Since the notions of the spaces $D(\EE)^*$, $S$ only depend on the
symmetric part of the form, we may and do assume that
$(\EE,D(\EE))$ is symmetric. Let us define
$(\EE^{\#},D(\EE^{\#}))$, $E^{\#}$, $i$ as in the proof of Lemma
\ref{lem2.2} and set $\mu^{\#}=\mu\circ i^{-1}$. Then $\mu^{\#}$
is a bounded Borel measure on $\EE^{\#}$ and $\mu^{\#}\in
D(\EE^{\#})^*$. Moreover, by \cite[Corollary VI.1.4]{MR},
$|\mu|\in S$ if and only if $\mu^{\#}$ is smooth with respect to
$(\EE^{\#},D(\EE^{\#}))$. Therefore, without loss of generality, we
may and do assume that $E$ is a locally compact separable metric
space, $m$ is a positive Radon measure on $E$ with
$\mbox{supp}[m]=E$, and  $(\EE,D(\EE))$ is a regular form on
$L^2(E;m)$. Let $E=E^+\cup E^-$ be the Hahn decomposition of $E$
(for the measure $\mu$) and $B$  a Borel subset of $E$ such that
$\mbox{Cap}(B)=0$. We may assume that $B\subset E^+$. For every
$\varepsilon>0$ there exists a compact set $K\subset B$ and an
open set $U$ such that $B\subset U\subset E$ and $|\mu|(U\setminus
K)\le\varepsilon$. Let $(\EE_U,D(\EE_U))$ denote the part of
$(\EE,D(\EE))$ on $U$. Let $\eta\in D(\mathcal{E}_U)\cap C_0(U)$
be such that $\eta\ge \mathbf{1}_K$ and set
$\bar\eta=(\eta\vee0)\wedge1$. Then $\bar\eta\in
D(\mathcal{E}_U)\cap C_0(U)$, $\bar\eta\ge \mathbf{1}_K$ and
\begin{align*}
\mu^+(K)\le \int_K\bar\eta(x)\,\mu(dx)&=\int_E\bar\eta(x)\,\mu(dx)
-\int_{E\setminus K}\bar\eta(x)\,\mu(dx) \le
\|\mu\|_{D(\EE)^*}\|\bar\eta\|_{\mathcal{E}}+\varepsilon.
\end{align*}
Since $\|\bar\eta\|_{\mathcal{E}}=\|\bar\eta\|_{\mathcal{E}_U} \le
\|\eta\|_{\mathcal{E}_U}$, we have
\begin{equation}
\label{eq3.01} \mu^+(K)\le
\|\mu\|_{D(\EE)^*}\|\eta\|_{\mathcal{E}_U}+\varepsilon.
\end{equation}
Let $\mbox{Cap}$ (resp. $\mbox{Cap}_U$) denote the capacity
associated with the form $(\EE,D(\EE))$ (resp. $(\EE_U,D(\EE_U))$)
(see \cite[Section 2.1]{FOT}). Since $D(\mathcal{E}_U)\cap C_0(U)$
is a special standard core of $\EE_U$, (\ref{eq3.01})  and
\cite[Lemma 2.2.7]{FOT} imply that
\[
\mu^+(K)\le \mbox{Cap}_U(K)\cdot
\|\mu\|_{D(\EE)^*}+\varepsilon\le\varepsilon,
\]
the last inequality being a consequence of the fact that if
$\mbox{Cap}(K)=0$, then $\mbox{Cap}_U(K)=0$, which follows from
Exercise III.2.10, Theorem III.2.11(ii) and Theorem IV.5.29(i) in
\cite{MR}. Hence $\mu^+(B)\le 2\varepsilon$ for $\varepsilon>0$,
which shows that $\mu^+\in S$. In much the same way we show that
$\mu^-\in S$. Thus $|\mu|\in S$, and the proof is complete.
\end{proof}

\begin{theorem}
\label{th3.3} Assume that $(\EE,D(\EE))$ is transient and let
$\mu\in\MM_b$. Then $\mu\in\MM_{0,b}$ if and only if  there exists
$f\in L^{1}(E;m)$ and $\nu\in \FF^*_e\cap\MM_b$ such that
\begin{equation}
\label{eq3.1} \mu=f\cdot m+\nu,
\end{equation}
i.e. for every bounded $u\in\FF_e$,
\begin{equation}
\label{eq3.2} \langle\mu,\tilde u\rangle=(f,u)+\langle\nu,\tilde
u\rangle.
\end{equation}
\end{theorem}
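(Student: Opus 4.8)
The plan is to prove the two implications separately; the forward direction (sufficiency of the decomposition) is easy, the converse is the substantial one. For sufficiency, suppose $\mu=f\cdot m+\nu$ with $f\in L^1(E;m)$ and $\nu\in\FF^*_e\cap\MM_b$. Every $\EE$-exceptional Borel set is $m$-negligible, so the finite measure $|f|\cdot m$ charges no such set and lies in $S$. Since $\tilde\EE\le\tilde\EE_1$ on $D(\EE)$, the dense inclusion $D(\EE)\hookrightarrow\FF_e$ is continuous, whence $\FF^*_e\subset D(\EE)^*$ and $\nu\in D(\EE)^*\cap\MM_b$, so $|\nu|\in S$ by Proposition~\ref{prop3.1}. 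As $|\mu|\le|f|\cdot m+|\nu|$ and $S$ is stable under finite sums and under passing to smaller nonnegative measures, $|\mu|\in S$, i.e. $\mu\in\MM_{0,b}$. I would also note that (\ref{eq3.1}) and (\ref{eq3.2}) are equivalent, since bounded functions are $\tilde\EE$-dense in $\FF_e$.

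For the converse, assume $\mu\in\MM_{0,b}$. Since $\mu^\pm\le|\mu|\in S$ we have $\mu^\pm\in\MM^+_{0,b}$, so it suffices to treat $\mu\in\MM^+_{0,b}$. By Lemma~\ref{lem2.2} pick an increasing nest $\{F_n\}$ with $\mu_n:=\fch_{F_n}\cdot\mu\in S^{(0)}_0$; as $E\setminus\bigcup_nF_n$ is $\EE$-exceptional and $\mu$ charges no such set, $\mu_n\uparrow\mu$ setwise and $\mu_n(E)\uparrow\mu(E)<\infty$. Let $u_n\in\FF_e$ be the $0$-order potential of $\mu_n$, i.e. $\tilde\EE(u_n,v)=\langle\mu_n,\tilde v\rangle$ for all $v\in\FF_e$; potentials of nonnegative measures are nonnegative and order preserving, so $0\le u_n\uparrow$, and I set $u:=\lim_nu_n$, an $\EE$-quasi-continuous $[0,\infty]$-valued function.

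The heart of the argument is a truncation estimate. Writing $T_k$ for truncation at level $k\ge1$, the Markov property gives $T_ku_n\in\FF_e$, and the Beurling--Deny type inequality $\tilde\EE(T_kw,T_kw)\le\tilde\EE(T_kw,w)$ (valid for $w\ge0$), applied with the admissible bounded test function $T_ku_n$, yields
\[
\tilde\EE(T_ku_n,T_ku_n)\le\tilde\EE(T_ku_n,u_n)=\langle\mu_n,\widetilde{T_ku_n}\rangle\le k\,\mu_n(E)\le k\,\mu(E).
\]
Since $(\FF_e,\tilde\EE)$ is a Hilbert space and $T_ku_n\uparrow T_ku$ $\EE$-q.e., it follows that $T_ku\in\FF_e$, $\tilde\EE(T_ku,T_ku)\le k\,\mu(E)$, and $T_ku_n\rightharpoonup T_ku$ weakly in $\FF_e$. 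Applying this with $\tfrac1kT_ku\ge\fch_{\{u\ge k\}}$ ($\EE$-q.e.) gives $\mathrm{Cap}^{(0)}(\{u\ge k\})\le\mu(E)/k$, so $u<\infty$ $\EE$-q.e. and hence, $\mu$ charging no $\EE$-exceptional set, also $\mu$-a.e. Moreover $T_ku=\min(u,k)$, being the minimum of the excessive functions $u$ and $k$ and lying in $\FF_e$, is itself a $0$-order potential $U_0\sigma_k$ with $\sigma_k\in S^{(0)}_0$, $\sigma_k\ge0$; comparing potentials one gets $\sigma_k(E)\le\mu(E)$, $\sigma_k\to\mu$, and $\mu-\sigma_k$ concentrated (up to $\EE$-exceptional sets) on $\{u\ge k\}$.

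It remains to produce $f$ and $\nu$, and here I would follow the scheme of Boccardo--Gallou\"et--Orsina adapted to the present setting, using the level sets of $u$. For $0=k_0<k_1<k_2<\cdots\uparrow\infty$ one decomposes $\mu$ into ``shell'' pieces $\fch_{\{k_{j-1}\le u<k_j\}}\cdot\mu$ (together with the surface contributions of the $\sigma_{k_j}$ carried by $\{u=k_j\}$), each lying in $S^{(0)}_0$ with $\FF^*_e$-norm squared at most $k_j\,\mu(\{k_{j-1}\le u<k_j\})$. The difficulty, and what I expect to be the main obstacle, is that these pieces need not be summable in $\FF^*_e$; the remedy is to choose the levels $k_j$ adapted to the distribution of $\mu$ over the shells (whose total mass is $\mu(E)<\infty$) and, on the shells where $k_j\,\mu(\{k_{j-1}\le u<k_j\})$ is too large, to split the shell piece into an $L^1$ part with $L^1$-mass dominated by the shell mass (each such piece has potential bounded by $k_j$, so it admits such a splitting) and a residual piece of small $\FF^*_e$-norm, in a summable way, the capacity bound $\mathrm{Cap}^{(0)}(\{u\ge k\})\le\mu(E)/k$ being what makes the accounting close. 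One obtains $f\in L^1(E;m)$ as the $L^1$-limit of the accumulated densities and a residual functional $\nu_0\in\FF^*_e$; then $\nu:=\mu-f\cdot m$ is automatically a bounded signed measure, equals $\nu_0$, hence lies in $\FF^*_e\cap\MM_b$, and (\ref{eq3.2}) holds for bounded $v\in\FF_e$ and thence, by density, for all $v\in\FF_e$. Everything apart from this assembly reduces to the truncation estimate above and routine monotone- and weak-limit passages.
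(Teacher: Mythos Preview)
Your treatment of sufficiency is fine and coincides with the paper's. For necessity you take a different route from the paper: the paper decomposes $\mu=\sum_n\mu_n$ with $\mu_n=\fch_{F_{n+1}\setminus F_n}\cdot\mu\in S^{(0)}_0$ (via Lemma~\ref{lem2.2}), approximates each $\mu_n$ in $\FF_e^*$ by the $L^1$-densities $\alpha U^{\alpha}_{A^{\mu_n}}1\cdot m$ arising from the resolvent of the associated Markov process, uses Banach--Saks to pass to strong $\FF_e^*$-convergence, and then telescopes with a $2^{-n}$ choice of approximants. You instead construct the global potential $u=\sup_n U_0(\fch_{F_n}\cdot\mu)$, prove the truncation bound $\tilde\EE(T_ku,T_ku)\le k\,\mu(E)$, and attempt a Boccardo--Gallou\"et--Orsina style level-set assembly.

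The truncation estimate and the identification $T_ku=U_0\sigma_k$ are correct, but the assembly step has a genuine gap. Your parenthetical ``each such piece has potential bounded by $k_j$, so it admits such a splitting'' is the crux and is unjustified: boundedness of $U_0\sigma$ by $C$ yields only $\|\sigma\|_{\FF_e^*}^2=\langle\sigma,U_0\sigma\rangle\le C\,\sigma(E)$ and gives no information about approximability of $\sigma$ in $\FF_e^*$ by $L^1$-densities whose $L^1$-mass is controlled by $\sigma(E)$. What is actually required is precisely the statement the paper establishes through (\ref{eq3.3})--(\ref{eq3.6}): for any $\sigma\in S^{(0)}_0\cap\MM_b$ and any $\varepsilon>0$ there exists $g\ge0$ with $\|g\|_{L^1(E;m)}\le\sigma(E)$ and $\|\sigma-g\cdot m\|_{\FF_e^*}<\varepsilon$. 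In the classical BGO setting this comes from mollification; in the present generality the natural (and apparently only available) tool is the resolvent approximation $g_\alpha=\alpha U^{\alpha}_{A^{\sigma}}1$, and the capacity bound you invoke does not substitute for it---smallness of $\mathrm{Cap}^{(0)}(\{u\ge k\})$ controls neither the $\FF_e^*$-norm nor any $L^1$-structure of a measure supported there. Once this approximation lemma is in hand, your shell framework can indeed be completed with the same $2^{-n}$ bookkeeping the paper uses; but as written the argument is incomplete at exactly the decisive step.
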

\begin{proof}
Since the notions of the spaces  $S$, $\FF_e,\FF^{*}_e$ only
depend on  $(\tilde\EE,D(\EE))$, we may and do assume that
$(\EE,D(\EE))$ itself is symmetric.

First assume that $\mu\in\MM_{0,b}$. Without loss of generality we
may assume that $\mu$ is positive. By Lemma \ref{lem2.2} there
exists a nest $\{F_{n}\}$ such that $\mathbf{1}_{F_{n}}\cdot\mu\in
S^{(0)}_0$. Clearly, $\mu_n\equiv\mathbf{1}_{F_{n+1}\setminus
F_n}\cdot\mu\in S^{(0)}_0$ and, since $(\bigcup_{n=1}^{\infty}
F_n)^c$ is exceptional and $\mu$ is smooth,
$\mu=\sum^{\infty}_{n=1}\mu_n$. Let $\BX$ be an $m$-tight special
standard Markov process properly associated with $(\EE,D(\EE))$,
$(R_{\alpha})_{\alpha>0}$ be the resolvent of $\BX$,  and let
$A^{\mu_n}$ be a positive continuous additive functional of $\BX$
associated with $\mu_n$ in the Revuz sense (see \cite[Theorem
VI.2.4]{MR}). For $\alpha>0$ set
\[
\mu^{\alpha}_n=(\alpha U^{\alpha}_{A^{\mu_n}}1)\cdot m,
\]
where
\[
U^{\alpha}_{A^{\mu_n}}1(x)=E_x\int^{\infty}_0e^{-\alpha t}
\,dA^{\mu_n}_t,\quad x\in E.
\]
By \cite[Proposition A.7]{MMS}, $U^{\alpha}_{A^{\mu_n}}1$ is an
$\EE$-quasi-continuous version of the $\alpha$-potential
$U_{\alpha}\mu_n$ of $\mu_n$. From this and \cite[Theorem
A.8(iv)]{MMS} it follows that
\begin{equation}
\label{eq3.3}
\langle\mu^{\alpha}_n,u\rangle=\alpha(u,U^{\alpha}_{A^{\mu_n}}1)
=\alpha\langle\mu_n,R_{\alpha}u\rangle
\end{equation}
for every nonnegative Borel measurable $u$. From (\ref{eq3.3})  one can
deduce that
\begin{equation}
\label{eq3.17} \langle\mu^{\alpha}_n,u\rangle=\langle\mu_n,\alpha
R_{\alpha}u\rangle
\end{equation}
for $u\in\FF$. Let $u\in\FF_e$ and let $\{u_k\}\subset D(\EE)$ be
an approximating sequence for $u$. Then
\begin{equation}
\label{eq3.18}
\langle\mu^{\alpha}_n,u_k\rangle=\EE(U\mu^{\alpha}_n,u_k)
\le\EE(U\mu_n,U\mu_n)^{1/2}\EE(u_k,u_k)^{1/2}
\end{equation}
for every $k\in\BN$ because by (\ref{eq3.17}),
\begin{align*}
\langle\mu^{\alpha}_n,u_k\rangle=\EE(U\mu_n,\alpha R_{\alpha}u_k)
&\le\EE(U\mu_n,U\mu_n)^{1/2}\EE(\alpha
R_{\alpha}u_k,\alpha R_{\alpha}u_k)^{1/2}\\
&\le\EE(U\mu_n,U\mu_n)^{1/2}\EE(u_k,u_k)^{1/2}.
\end{align*}
Letting $k\rightarrow\infty$ in (\ref{eq3.18}), we get
\begin{equation}
\label{eq3.6}
\langle\mu^{\alpha}_n,u\rangle\le\EE(U\mu_n,U\mu_n)^{1/2}\EE(u,u)^{1/2}.
\end{equation}
Thus $\mu^{\alpha}_n\in S^{(0)}_0$. Given $\gamma\in S^{(0)}_0$,
let $T_{\gamma}$ be the bounded linear operator on
$\FF_e$ defined as
\[
T_{\gamma}(u)=\langle\gamma,\tilde u\rangle =\EE(U\gamma,u).
\]
From (\ref{eq3.6}) it follows that for every $n\in\BN$,
\[
\sup_{\alpha>0}\EE(U\mu^{\alpha}_n,U\mu^{\alpha}_n)^{1/2}
\le\sup_{\alpha>0}\|T_{\mu^{\alpha}_n}\|
\le\EE(U\mu_n,U\mu_n)^{1/2}<\infty,
\]
where $\|T_{\mu^{\alpha}_n}\|$ stands for the operator norm of
$T_{\mu^{\alpha}_n}$. By the above and the Banach-Saks theorem,
for every $n\in\BN$  we can choose  a sequence $\{\alpha^n_l\}$
such that $\alpha^n_l\rightarrow\infty$ as $l\rightarrow\infty$
and the sequence $\{U(\gamma_k(\mu_n))\}$, where
\[
\gamma_k(\mu_n)=f_k(\mu_n)\cdot m,\quad
f_k(\mu_n)=\frac1k\sum^k_{l=1}\alpha^n_lU^{\alpha^n_l}_{A^{\mu_n}}1,
\]
is $\EE$-convergent to some $g\in\FF_e$ as $k\rightarrow\infty$.
Equivalently, $\|T_{\gamma_k(\mu_n)}-T\|\rightarrow0$ as
$k\rightarrow\infty$, where $T(u)=\EE(g,u)$ for $u\in\FF_e$. On
the other hand, by (\ref{eq3.17}) and \cite[Theorem I.2.13]{MR},
for every $u\in\FF_e$,
\[
T_{\mu^{\alpha}_n}(u)=\langle\mu_n,\alpha
R_{\alpha}u\rangle=\EE(U\mu_n,\alpha
R_{\alpha}u)\rightarrow\EE(U\mu_n,u)=T_{\mu_n}(u)
\]
as $\alpha\rightarrow\infty$. It follows that in fact
$T=T_{\mu_n}$. We can therefore find a subsequence $\{k_n\}$ such
that
\begin{equation}
\label{eqa.2} \|T_{\gamma_{k_n}(\mu_{n})}-T_{\mu_n}\|\le 2^{-n}
\end{equation}
for every $n\in\BN$. Set
\[
f=\sum_{n=1}^{\infty}f_{k_n}(\mu_{n}),\quad
\nu=\sum_{n=1}^{\infty}(\mu_{n}-\gamma_{k_n}(\mu_{n})). \quad
\]
Then
\[
\mu=\sum_{n=1}^{\infty}\mu_{n}=f\cdot m+\nu.
\]
Since $m$ is $\sigma$-finite, there exists a sequence $\{U_l\}$ of
Borel subsets of $E$ such that $\bigcup^{\infty}_{l=1}U_l=E$,
$U_l\subset U_{l+1}$ and $m(U_l)<\infty$, $l\in\BN$. By
(\ref{eq3.3}),
\[
(\alpha U^{\alpha}_{A^{\mu_n}}1,\fch_{U_l}) \le\langle\mu_n,\alpha
R_{\alpha}1\rangle\le\langle\mu_n,1\rangle
\]
for every $\alpha>0$. Therefore,  for every $l\in\BN$ we have
\[
(f,\mathbf{1}_{U_l})\le \sum_{n=1}^{\infty}
(f_{k_n}(\mu_{n}),\mathbf{1}_{U_l}) \le
\sum_{n=1}^{\infty}\langle\mu_{n},1\rangle
\le\sum_{n=1}^{\infty}\|\mu_{n}\|_{TV}=\|\mu\|_{TV}.
\]
Hence $\|f\|_{L^1(E;m)}<\infty$ by the monotone convergence
theorem.  It follows in particular that $\nu=\mu-f\cdot
m\in\MM_{b}$. On the other hand, by (\ref{eqa.2}),
$\nu\in\FF^*_e$, which proves that $\mu$ is of the form
(\ref{eq3.1}).

Now, suppose that $\mu$ is given by the right-hand side of
(\ref{eq3.1}). Since $f\cdot m\in\MM_{0,b}$, we have only to prove
that if $\mu\in\FF^*_e\cap\MM_b$ then $|\mu|\in S$. But this
follows from Proposition \ref{prop3.1} since $\FF^*_e\subset
D(\EE)^*$.
\end{proof}

\begin{corollary}
\label{cor3.3} Let $\mu\in\MM_{b}$. Then $\mu\in\MM_{0,b}$ if and
only if there exist $f\in L^1(E;m)$ and $\nu\in D(\EE)^*$ such that
\mbox{\rm(\ref{eq3.2})} holds true for every bounded $u\in
D(\EE)$.
\end{corollary}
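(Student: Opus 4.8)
The plan is to reduce Corollary \ref{cor3.3} to the transient case handled in Theorem \ref{th3.3} by the standard device of adding a killing term. First I would dispose of one direction immediately: if $\mu = f\cdot m + \nu$ with $f\in L^1(E;m)$ and $\nu\in D(\EE)^*\cap\MM_b$, then $f\cdot m\in\MM_{0,b}$ trivially, and $|\nu|\in S$ by Proposition \ref{prop3.1}; hence $\mu\in\MM_{0,b}$. So the content is the forward implication: given $\mu\in\MM_{0,b}$, produce the decomposition.

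For the forward implication, the key observation is that $(\EE_1,D(\EE))$, i.e. the form $\EE$ with the zeroth-order perturbation $\EE_1(u,v)=\EE(u,v)+(u,v)$, is a \emph{transient} quasi-regular Dirichlet form on $L^2(E;m)$, and its extended Dirichlet space is exactly $D(\EE)$ itself with inner product $\tilde\EE_1$, so that $(\FF_e)^{(\EE_1)}=D(\EE)$ and $((\FF_e)^{(\EE_1)})^* = D(\EE)^*$. Moreover the capacity and hence the class $S$ of smooth measures is unchanged when passing from $\EE$ to $\EE_1$ (this is classical; alternatively $S$ depends only on the nest structure/exceptional sets, which $\EE$ and $\EE_1$ share). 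So I would apply Theorem \ref{th3.3} to the transient form $\EE_1$: since $\mu\in\MM_{0,b}$ with respect to $\EE$ is the same as $\mu\in\MM_{0,b}$ with respect to $\EE_1$, Theorem \ref{th3.3} yields $f\in L^1(E;m)$ and $\nu\in D(\EE)^*\cap\MM_b$ with $\langle\mu,\tilde u\rangle = (f,u) + \langle\nu,\tilde u\rangle$ for every bounded $u\in D(\EE)$, which is precisely \eqref{eq3.2} in the $D(\EE)$ formulation. This gives the claim.

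One point that needs a short argument rather than a citation is the identification of the extended Dirichlet space of $\EE_1$ with $D(\EE)$: since $\EE_1(u,u)\ge (u,u)$ for all $u\in D(\EE)$, an $\EE_1$-approximating sequence $\{u_n\}$ for an element of the extended space is $\EE_1$-Cauchy, hence $L^2$-Cauchy, hence its $m$-a.e.\ limit already lies in $D(\EE)$ and the sequence converges to it in the $\tilde\EE_1$-norm; thus no genuine enlargement occurs. Equally one should note the Markov process and resolvent used inside the proof of Theorem \ref{th3.3} can be taken to be those of $\EE$ (the process properly associated with $\EE_1$ is just the $\EE$-process killed at an independent exponential rate), so nothing about quasi-regularity or the existence of the associated process is lost.

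I expect the only real obstacle to be bookkeeping: making sure that "bounded $u\in D(\EE)$" in the statement of the Corollary matches "bounded $u\in\FF_e$" for the transient form $\EE_1$ — but these coincide by the identification above — and that $\FF^*_e$ for $\EE_1$ is indeed $D(\EE)^*$ with the stated inner product $\tilde\EE_1$, which is exactly how $D(\EE)^*$ was defined in the paragraph preceding Proposition \ref{prop3.1}. Once these identifications are in place the Corollary is an immediate specialization of Theorem \ref{th3.3}, and no new estimates are required.
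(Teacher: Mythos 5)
Your proposal is correct and follows exactly the paper's own route: apply Theorem \ref{th3.3} to the transient quasi-regular form $(\EE_1,D(\EE))$, whose extended Dirichlet space is $(D(\EE),\tilde\EE_1)$, noting that smoothness of $|\mu|$ is the same for $\EE$ and $\EE_1$. The extra details you supply (the identification of the extended space and the killed process) are harmless elaborations of the same argument.
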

\begin{proof}
The Dirichlet form  $(\EE_1,D(\EE))$ is  transient, quasi-regular
and its extended Dirichlet space is $(D(\EE),\tilde\EE_1)$.
Moreover, $|\mu|$ is smooth with respect to $(\EE,D(\EE))$ if and
only if it is smooth with respect to $(\EE_1,D(\EE))$. Therefore
the corollary follows from Theorem \ref{th3.3} applied to the form
$(\EE_1,D(\EE))$.
\end{proof}

\begin{corollary}
\label{cor3.4} Let $\mu\in\MM_{b}$. Then $\mu\in\MM_{0,b}$ if and
only if there exist $f\in L^{1}(E;m)$ and $v\in D(\EE)$  such that
for every bounded $u\in D(\EE)$,
\begin{equation}
\langle\mu,\tilde u\rangle=(f,u)+\EE_1(v,u).
\end{equation}
\end{corollary}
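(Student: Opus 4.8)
The plan is to derive the corollary directly from Corollary~\ref{cor3.3} by representing the functional $\nu\in D(\EE)^*$ produced there in the form $u\mapsto\EE_1(v,u)$ for a suitable $v\in D(\EE)$. The background fact I would rely on is that $(D(\EE),\tilde\EE_1)$ is a Hilbert space on which $\EE_1$ is a continuous coercive bilinear form: the weak sector condition gives $|\EE_1(w,u)|\le K\,\EE_1(w,w)^{1/2}\EE_1(u,u)^{1/2}$ for $w,u\in D(\EE)$, and since $\EE_1(u,u)=\tilde\EE_1(u,u)$ (the antisymmetric part vanishes on the diagonal) this is precisely continuity and coercivity with respect to the norm $\tilde\EE_1(\cdot,\cdot)^{1/2}$ of $D(\EE)$.

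For the ``only if'' implication I would argue as follows. Given $\mu\in\MM_{0,b}$, Corollary~\ref{cor3.3} supplies $f\in L^1(E;m)$ and $\nu\in D(\EE)^*$ with $\langle\mu,\tilde u\rangle=(f,u)+\langle\nu,\tilde u\rangle$ for every bounded $u\in D(\EE)$. Since $\nu$ is a bounded linear functional on the Hilbert space $(D(\EE),\tilde\EE_1)$, the Lax--Milgram theorem applied to the form $\EE_1$ produces a unique $v\in D(\EE)$ with $\langle\nu,\tilde w\rangle=\EE_1(v,w)$ for all $w\in D(\EE)$. Restricting to bounded $u$ then yields $\langle\mu,\tilde u\rangle=(f,u)+\EE_1(v,u)$, as required.

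For the ``if'' implication, suppose $f\in L^1(E;m)$ and $v\in D(\EE)$ satisfy $\langle\mu,\tilde u\rangle=(f,u)+\EE_1(v,u)$ for every bounded $u\in D(\EE)$. By the sector condition the map $u\mapsto\EE_1(v,u)$ is a bounded linear functional on $(D(\EE),\tilde\EE_1)$, i.e.\ it defines an element $\nu\in D(\EE)^*$, and with this $\nu$ the identity \mbox{\rm(\ref{eq3.2})} holds for every bounded $u\in D(\EE)$; Corollary~\ref{cor3.3} then gives $\mu\in\MM_{0,b}$. I expect no genuine difficulty in this argument; the only subtlety to be careful about is that, as $(\EE,D(\EE))$ need not be symmetric, one has to invoke the Lax--Milgram theorem rather than the Riesz representation theorem, so as to recover $\EE_1$ itself (and not merely its symmetric part $\tilde\EE_1$) in the representation.
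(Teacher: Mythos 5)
Your argument is correct and coincides with the paper's proof, which likewise deduces the corollary from Corollary \ref{cor3.3} together with the Lax--Milgram theorem (using the weak sector condition for continuity and $\EE_1(u,u)=\tilde\EE_1(u,u)$ for coercivity on $(D(\EE),\tilde\EE_1)$). You have simply written out the details that the paper leaves implicit, including the converse direction via boundedness of $u\mapsto\EE_1(v,u)$.
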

\begin{proof}
Follows immediately from Corollary \ref{cor3.3} and the
Lax-Milgram theorem.
\end{proof}

\begin{remark}
(i) The decomposition (\ref{eq3.1}) is not unique since
$L^1(E;m)\cap\FF_e^{*}\neq\emptyset$.
\smallskip\\
(ii) In general, $D(\EE)^*\cap\MM_b$ in the decomposition
(\ref{eq1.2}) cannot be replaced by $(S_0-S_0)\cap\MM_b$. To see
this, let us consider the classical Dirichlet form (see Example
\ref{ex4.1} in the next section) with $D=B(0,1)\subset\BR^7$,
where $B(0,r)$ denote the open ball  with radius $r>0$ and center
at 0. Let $\sigma_{a_n}$ denote the surface measure on $\partial
B(0,a_n)$ with $a_n=n^{-1/4}$, and let
\[
\mu=\sum_{n=1}^\infty \sigma_{a_n}.
\]
From \cite[Example 5.2.2]{FOT} it follows that $\sigma_{a_n}\in S$
for each $n\in\BN$. Hence $\mu\in S$. Moreover,
$\mu(D)=c\sum^{\infty}_{n=1}a_n^{6}<\infty$, so
$\mu\in\MM^+_{0,b}$. Let
\begin{equation}
\label{eq3.7} \mu(dx)=f\,dx+\nu(dx)
\end{equation}
be a decomposition of $\mu$ as in Theorem \ref{th3.3}. Then
$\nu\notin S_0-S_0$. To show this, let us denote by $\mu_s,\nu_s$
the singular parts (with respect to the Lebesgue measure) of $\mu$
and $\nu$, respectively, and observe that $\mu=\mu_s=\nu_s$.
Suppose that $\nu\in S_0-S_0$. Then $\nu^+,\nu^-\in S_0$ and hence
$\nu^+_s,\nu^-_s\in S_0$ because $\nu^+_s\le\nu^+$ and
$\nu^-_s\le\nu^-$. Consequently,  $\mu_s=\nu^+_s-\nu^-_s\in
S_0-S_0$, which implies that $\mu\in S_0$ since $\mu$ is positive.
On the other hand, if we set $u(x)=|x|^{-2}-1$, $x\in D$, then
$u\in H^1_0(D)$
and
\[
\langle\mu,u\rangle=\sum_{n=1}^{\infty}\int_{\partial B(0,a_n)}
u(x)\,\sigma_{a_n}(dx)=c\sum^{\infty}_{n=1}(a_n^4-a_n^6)=\infty,
\]
which is a contradiction, because by \cite[Theorem 2.2.2]{FOT}, if
$\mu\in S_0$, then quasi-continuous elements of $H^1_0(D)$ are
integrable with respect to $\mu$.
\smallskip\\
(iii) From the proof of Theorem \ref{th3.3} it follows that if
$\mu\in\MM^+_{0,b}$, then the $L^1$ part $f$ of its decomposition
can be chosen to be positive. The example given above shows that
in general this is not true  for $\nu$, because if $\nu$ in
(\ref{eq3.7}) were positive, we would have $\nu\in S_0$ and hence
$\mu\in S_0$.
\smallskip\\
(iv) By the definition of $S_0$,  $(S_0-S_0)\cap\MM_b\subset
D(\EE)^*\cap\MM_b$. The opposite inclusion is false and  $\nu$ of
(\ref{eq3.7}) can serve as a counterexample. Below we give an
explicit construction of another counterexample. Let $D$, $a_n$,
$\sigma_{a_n}$ be  as in (ii), and let
$b_n=(\frac{n+(1/2)}{n(n+1)})^{1/4}$, so that
$a_1>b_1>a_2>b_2>\cdots$. Let $\nu_{a_n}(x)$, $\nu_{b_n}(x)$ denote
the first components of the outer normal vectors to $\partial
B(0,a_n)$ and $\partial B(0,b_n)$ at  $x$. Set
\[
\mu(dx)=\sum_{n=1}^\infty
(\nu_{a_n}(x)\,\sigma_{a_n}(dx)-\nu_{b_n}(x)\,\sigma_{b_n}(dx)).
\]
Then $\mu\in\mathcal{M}_{0,b}$ and for every $\eta\in H^1_0(D)$ we
have
\begin{align*}
\langle\mu,\eta\rangle&=\sum_{n=1}^\infty\Big(\int_{\partial
B(0,a_n)}\eta(x)\nu_{a_n}(x)\,\sigma_{a_n}(dx)-\int_{\partial
B(0,b_n)}\eta(x)\nu_{b_n}(x)\,\sigma_{b_n}(dx)\Big)\\&
=\sum_{n=1}^\infty \int_{B(0,a_n)\setminus
B(0,b_n)}\frac{\partial\eta}{\partial x_1}(x)\,dx\le C
\|\eta\|_{H^1_0(D)}.
\end{align*}
Hence $\mu\in H^{-1}(D)$. But $|\mu|\notin H^{-1}(D)$ because if
it were true, the series
\[
\sum_{n=1}^{\infty}\int_{\partial B(0,a_n)} u(x)\,\sigma_{a_n}(dx)
=c\sum^{\infty}_{n=1}a_n^4
\]
would be convergent.
\end{remark}

\section{Examples} \label{sec4}

In this section, we apply Theorem \ref{th3.3} to give explicit
description of the set $\MM_{0,b}$ for some classes of regular
local forms, regular nonlocal forms and quasi-regular forms.

\begin{example}
\label{ex4.1} (Classical Dirichlet form) Let $D\subset\BR^d$ be a
bounded domain. Consider the classical form
\[
\BD(u,v)=\frac12\int_D\langle\nabla u,\nabla
v\rangle_{\BR^d}\,dx,\quad u,v\in D(\EE)=H^1_0(D).
\]
It is known that $(\BD,H^1_0(D))$ is a transient regular Dirichlet
form on $L^2(D;dx)$ (see \cite[Example 1.5.1]{FOT}). If
$\mu\in\MM^+_b$, then $\mu\in S$ if and only if $\mu$ charges no
set of Newtonian capacity zero. By Poincar\'e's inequality, the
norms determined by $\BD$ and $\BD_1$ are equivalent (of course,
the norm determined by $\BD_1$ is the usual norm in the Sobolev
space $H^1_0(D)$). As a consequence, $\FF_e=H^1_0(D)$ and hence
$\FF^*_e=H^{-1}(D)$. From Theorem \ref{th3.3} and the well known
characterization of $H^{-1}(D)$ it follows that if $\mu\in\MM_{b}$,
then $\mu\in\MM_{0,b}$ if and only if
\begin{equation}
\label{eq4.1} \mu=f^0-\dyw F
\end{equation}
for some $f^0\in L^1(D;dx)$ and $F=(F^1,\dots,F^d)$ such that
$F^i\in L^2(D;dx)$, $i=1,\dots,d$. We see that in the case of the
classical form the decomposition of Theorem \ref{th3.3} reduces to
the decomposition proved in \cite[Theorem 2.1]{BGO}.
\end{example}

On can easily check that (\ref{eq4.1}) holds for more general
(possibly nonsymmetric) regular Dirichlet forms defined by Eq.
(2.17) in \cite[Section II.2]{MR} with coefficients satisfying the
assumptions of  \cite[Proposition II.2.11]{MR}.

\begin{example}
(Regular nonlocal Dirichlet forms) Let $\psi:\BR^d\rightarrow\BR$
be a continuous negative definite function, let $s\in\BR$, and let
\[
H^{\psi,s}=\{u\in\SSS'(\BR^d):\int_{\BR^d}(1+\psi(\xi))^s |\hat
u(\xi)|^2\,d\xi <\infty\},
\]
where $\SSS'(\BR^d)$ is the space of tempered distributions  and
$\hat u$ is the Fourier transform of $u$. Note that
$H^{\psi,1}=\{u\in L^2(\BR^d;dx):\int_{\BR^d}\psi(\xi) |\hat
u(\xi)|^2\,d\xi <\infty\}$. Consider the form
\[
\Psi(u,v)=\int_{\BR^d}\psi(\xi)\hat u(\xi)\overline{\hat
v(\xi)}\,d\xi,\quad u,v\in H^{\psi,1}.
\]
It is known (see \cite[Example 1.4.1]{FOT}) that
$(\Psi,H^{\psi,1})$ is a symmetric regular Dirichlet form on
$L^2(\BR^d;dx)$.

By \cite[Theorem 3.10.11]{J}, the dual space of $H^{\psi,1}$ is the
space $H^{\psi,-1}$ in the sense that for every
$\nu\in(H^{\psi,1})^*$ there is $v\in H^{\psi,-1}$ such that $\hat
v\hat u$ may be interpreted as an element of $L^1(\BR^d;dx)$ and
the value of $\nu$ on $u\in H^{\psi,1}$ is equal to
$\int_{\BR^d}\hat v(x)\hat u(x)\,dx$. Therefore from Corollary
\ref{cor3.3} it follows that if $\mu\in\MM_{0,b}$, then there exist
$f\in L^1(\BR^d;dx)$ and $v\in\ H^{\psi,-1}$ such that for every
bounded $u\in H^{\psi,1}$,
\[
\langle\mu,\tilde u\rangle=(f,u)+\int_{\BR^d}\hat v(x)\hat
u(x)\,dx.
\]
Since $H^{\psi,1}=H^1(\BR^d)$ for $\psi(\xi)=|\xi|^2$, the above
decomposition may be viewed as a generalization of (\ref{eq4.1}).
\end{example}

\begin{example}
\label{ex4.3} (Gradient Dirichlet forms on infinite dimensional
space) Let $H$ be a separable real Hilbert space and let $A$ be a
self-adjoint operator $A$ such that $\langle
Ax,x\rangle_H\le-\omega|x|_H^2$, $x\in D(A)$, for some $\omega>0$
and $A^{-1}$ is of trace class. Let $Q_{\infty}=-\frac12 A^{-1}$,
and let $\gamma$ denote the Gaussian measure on $H$ with mean 0
and covariance operator $Q_{\infty}$. We consider the form
\[
\EE(u,v)=\frac{1}{2}\int_H\langle\nabla u,\nabla
v\rangle_H\,\gamma(dx), \quad u,v\in\FF C^{\infty}_b,
\]
where $\FF C^{\infty}_b$ is the space of finitely based smooth
bounded functions on $H$ (see \cite[Section II.3]{MR} and $\nabla$
is the $H$-gradient defined for $u\in\FF C^{\infty}_b$ as the
unique element of $H$ such that $\langle\nabla
u(x),h\rangle_H=\frac{\partial u}{\partial h}(x)$ for $x\in H$. By
\cite[Proposition II.3.8]{MR}, the form $(\EE,\FF C^{\infty}_b)$ is
closable and its closure, which we denote by $(\EE,W^{1,2}(H))$,
is a symmetric Dirichlet form. Moreover, by results of
\cite[Section IV.4]{MR}, it is quasi-regular.

By Corollary \ref{cor3.4}, if $\mu\in\MM_{b}$, then
$\mu\in\MM_{0,b}$ if and only if there exist $f\in L^1(H;\gamma)$ and
$v\in W^{1,2}(H)$ such that for every bounded $u\in W^{1,2}(H)$,
\begin{equation}
\label{eq4.2} \langle\mu,\tilde u\rangle=\int_H fu\,\gamma(dx)
+\EE_1(v,u).
\end{equation}
In fact, $\mu\in\MM_{0,b}$ if and only if for some $f^0\in
L^1(H;\gamma)$ and $F\in L^2(H;\gamma)$ it can be written in the form
\begin{equation}
\label{eq4.3} \mu=f^0-\dyw_{\gamma}F,
\end{equation}
similar to (\ref{eq4.1}). Indeed, if $\mu\in\MM_{0,b}$, then by
(\ref{eq4.2}),
\begin{equation}
\label{eq4.4} \langle\mu,\tilde u\rangle=\int_Hf^0u\,\gamma(dx)
+\frac12\int_H\langle F,\nabla u\rangle_H\,\gamma(dx)
\end{equation}
with $f^0=v+f\in L^1(H;\gamma)$ and $F=\nabla v\in L^2(H;\gamma)$. On
the other hand, if $F\in C^1_b(H;H)$ has finite divergence with
respect to $\gamma$ (see \cite[Section 11.1]{DPZ} for the
definition), then by \cite[Lemma 11.1.9]{DPZ},
\begin{equation}
\label{eq4.5} \int_H\langle F,\nabla u\rangle_H\,\gamma(dx)
=-\int_H(\dyw_{\gamma}F)u\,\gamma(dx),
\end{equation}
where $\dyw_{\gamma}F(x)=\dyw F(x)-\langle
Q_{\infty}^{-1}x,F(x)\rangle_H$, $x\in H$, which when combined
with (\ref{eq4.4}) makes it legitimate to write $\mu$ in the form
(\ref{eq4.3}). Conversely, if $\mu\in\MM_b$ and $\mu$ is of the
form (\ref{eq4.3}) for some $f^0\in L^1(H;\gamma)$ and $F\in
L^2(H;\gamma)$, then $\dyw_{\gamma}F\in\MM_b$ and by (\ref{eq4.5}),
\[
|\langle \dyw_{\gamma}F,u\rangle|\le\|F\|_{L^2(H;\gamma)}\|\nabla
u\|_{L^2(H;\gamma)}\le C\EE(u,u)^{1/2}
\]
for all bounded $u\in W^{1,2}(H)$. That $\dyw_{\gamma}F$ is smooth
now follows from \cite[Lemma 2.2.3]{FOT}.
\end{example}

The assertion that $\mu\in\MM_{0,b}$ if and only if $\mu$ has a
decomposition (\ref{eq4.3}) holds true for forms more general than
those considered in Example \ref{ex4.3}. In fact, slightly modifying
the argument in Example  \ref{ex4.3}, one can show that it
holds for the form which is the closure of the form defined by
\cite[(20)]{F} if \cite[Hypotheses 3.1 and 3.2]{F} (the latter with $R=R^*>0$
such that $R^{-1}$ is bounded)  are satisfied.

\subsection*{Acknowledgements}

The first author was supported by Polish National Science Centre (grant  no.\\
2012/07/D/ST1/02107.

\noindent Tomasz Klimsiak, Andrzej Rozkosz\\
Faculty of Mathematics and Computer Science\\
Nicolaus Copernicus University\\
Chopina 12/18\\
87-100 Toru\'n, Poland\\
E-mail: tomas@mat.umk.pl, rozkosz@mat.umk.pl

\end{document}